\documentclass[12pt]{article}

\usepackage[letterpaper,margin=1in]{geometry}

\newcommand{\triplenorm}[1]{
  \left\vert\kern-0.9pt\left\vert\kern-0.9pt\left\vert #1
  \right\vert\kern-0.9pt\right\vert\kern-0.9pt\right\vert}

\usepackage{amsmath}
\usepackage{amsbsy}
\usepackage{amsfonts}
\usepackage{amssymb}
\usepackage{amscd}
\usepackage{mathrsfs}

\usepackage{bm}

\usepackage{algpseudocode}
\usepackage[Algorithm]{algorithm}

\algnewcommand{\IIf}[1]{\State\algorithmicif\ #1\ \algorithmicthen}
\algnewcommand{\EElse}{\unskip\ \algorithmicelse\ }
\algnewcommand{\EndIIf}{\unskip\ \algorithmicend\ \algorithmicif}
\algnewcommand{\FFor}[1]{\State\algorithmicfor\ #1\ }
\algnewcommand{\EndFFor}{\unskip\ \algorithmicend\ \algorithmicfor}

\usepackage{txfonts}

\usepackage[latin1]{inputenc}

\usepackage{graphicx}
\usepackage{subfigure} 
\usepackage{float}

\usepackage{tikz}   
\usetikzlibrary{shapes,arrows,decorations.pathmorphing,backgrounds,positioning,fit,matrix,calc}

\tikzstyle{decision} = [diamond, draw, fill=blue!20, 
    text width=4.5em, text badly centered, node distance=3cm, inner sep=0pt]
\tikzstyle{block} = [rectangle, draw, fill=blue!20, 
    text width=5em, text centered, rounded corners, minimum height=4em]
\tikzstyle{line} = [draw, -latex']
\tikzstyle{cloud} = [draw, ellipse,fill=red!20, node distance=3cm,
    minimum height=2em]

\usepackage{hyperref}
\usepackage{type1cm}       

\usepackage{soul}

\usepackage{url}

\usepackage{rotating} 
\usepackage{makeidx}
\makeindex             
\usepackage{multicol}   
\usepackage{enumerate}
\usepackage{xspace}
 \usepackage{comment}  
\usepackage{etoolbox}

\usepackage{fancyhdr}
\pagestyle{fancy}
\rhead{}

\pagenumbering{arabic}

\ifcsmacro{theorem}{}{
\newtheorem{theorem}{Theorem}[section]
}
\ifcsmacro{lemma}{}{
\newtheorem{lemma}[theorem]{Lemma}
}
\ifcsmacro{corollary}{}{

}
\ifcsmacro{proposition}{}{

}
\ifcsmacro{algorithm}{}{

}

\let\oldchapter\chapter
\def\chapter{
  \setcounter{exercise}{0}
  \oldchapter
}

\newcommand{\beas}{\begin{eqnarray*}}
\newcommand{\eeas}{\end{eqnarray*}}
\newcommand{\bary}{\begin{array}}
\newcommand{\eary}{\end{array}}

\def\ec{\mathrel{\hbox{$\copy\Ea\kern-\wd\Ea\raise-3.5pt\hbox{$\sim$}$}}}
\newcommand{\lc}{\mathrel{\raise2pt\hbox{${\mathop<\limits_{\raise1pt\hbox{\mbox{$\sim$}}}}$}}}
\newcommand{\gc}{\mathrel{\raise2pt\hbox{${\mathop>\limits_{\raise1pt\hbox{\mbox{$\sim$}}}}$}}}

\newbox\Ea
\setbox\Ea=\hbox{\raise0.9pt\hbox{$=$}}

\newcommand{\bproof}{\begin{proof}}
\newcommand{\eproof}{\end{proof}}

\ifcsmacro{R}{}{
 
}

\newtheorem{remark}[theorem]{Remark}
\newtheorem{definition}[theorem]{Definition}

\begin{document}

\title{On the Weyl's law for discretized elliptic operators}
\author{Jinchao Xu, Hongxuan Zhang and Ludmil Zikatanov}

\maketitle

\begin{abstract}
  In this paper we give an estimate on the asymptotic behavior of
  eigenvalues of discretized elliptic boundary values problems. We
  first prove a simple min-max principle for selfadjoint operators on
  a Hilbert space. Then we show two sided bounds on the $k$-th
  eigenvalue of the discrete Laplacian by the $k$-th eigenvalue of the
  continuous Laplacian operator under the assumption that the finite
  element mesh is quasi-uniform. Combining this result with the
  well-known Weyl's law, we show that the $k$-th eigenvalue of the
  discretized isotropic elliptic operators, spectrally equivalent to
  the discretized Laplacian, is $\mathcal O\left(k^{2/d}\right)$.
  Finally, we show how these results can be used to obtain an error
  estimate for finite element approximations of elliptic eigenvalue
  problems.
\end{abstract}

\section{Introduction}

In this work, we focus on the finite element method as discretization of
solving linear partial differential equations of the form
\begin{equation}\label{e1}
    \mathcal L u= f,
\end{equation}
with given boundary conditions.  As is customary in finite element
method, the solution to~\eqref{e1} is approximated by a piece-wise
polynomial function (see~\cite{2002CiarletP-aa}). The behavior of the
eigenvalues of the linear operator $\mathcal L$ on the corresponding
finite element space when the mesh size becomes small are instrumental
in showing optimality of the methods, estimating the error of
approximation, construction of efficient solvers for the resulting
linear systems, and in all other components of the finite element
analysis. Classical references for the finite
element approximation of compact eigenvalue problems are the works by Strang and Fix~\cite{strang1973analysis}
and later the so-called
Babu\v{s}ka-Osborn
theory\cite{babuvska1987estimates,babuvska1989finite,babuska1991eigenvalue}. The
main result of this theory can be summarized as:
\begin{equation}\label{babuska-osborn}
    \lambda_k\le \lambda_{h,k}\le \lambda_k + C_k\sup_{\substack{u\in E_k\\\|u\|_V=1}}\inf_{v\in V_h}\|u-v\|_V^2,
\end{equation}
where $C_k$ are a constants depending on $k$ and $E_k$ denotes the
eigenspace associated with $\lambda_k$, the $k$-th eigenvalue of the
continuous Laplacian.  The proof of \eqref{babuska-osborn} found in
\cite{babuvska1987estimates} relies on an induction argument, and does
not seem to provide bounds on $C_k$ which are independent of $k$. Some
refinements on the estimates of the constants $C_k$ in
\eqref{babuska-osborn} are given in~\cite{babuvska1989finite,babuska1991eigenvalue}, and, more recently in~\cite{lin2014lower,zhang2015many},
and also follow from the main result in the present paper. We also mention that
bounding the eigenvalues of the discrete operators plays a crucial
role in the convergence analysis of multigrid
methods~\cite{2008ZikatanovL-aa,Falgout.R;Vassilevski.P.2004a,Falgout.R;Vasilevski.P;Zikatanov.L.2005a}.

In \cite{weyl1911asymptotische}, Hermann Weyl proved the following
asymptotic formula for the eigenvalues of the Dirichlet Laplacian in a
bounded domain $\Omega\subset \mathbb R^d$:
\[
    \lim_{k\rightarrow\infty}\frac{\lambda_k}{k^{\frac{2}{d}}}=\frac{(2\pi)^2}{[\omega_d \operatorname{Vol}(\Omega)]^{\frac{2}{d}}},
\]
where $\omega_d$ is a volume of the unit ball in $\mathbb{R}^d$, and
$\operatorname{Vol}(\Omega)$ is the volume of $\Omega$. This formula
was actually conjectured independently by Arnold Sommerfeld
\cite{sommerfeld1910greensche} and Hendrik Lorentz
\cite{lorentz1910alte} in 1910 who stated Weyl's law as a conjecture
based on the book of Lord Rayleigh
\cite{rayleigh1896theory}. Recently, V. Ivrii~\cite{2016IvriiV-aa}.
provided comprehensive summary on the research on Weyl's law since its
discovery.

In this paper the main result is a sharp estimate on the asymptotic
behavior of eigenvalues for Laplacian operator on the finite element
spaces as follows
\begin{equation}
    \lambda_k\le \lambda_{h,k}\le C\lambda_k,
\end{equation}
with $C$ independent of $k$, $h$.  We show such an estimate under the
assumption that the finite element mesh is quasi-uniform and in
combination with Weyl's Law for the PDE this result shows that the
$k$-th smallest eigenvalue of the discretized Laplacian operator is
$\mathcal O(k^{2/d})$. This leads to the conclusion that the
eigenvalues of the finite element discrete operator exhibit the same
asymptotic behavior as the eigenvalues of the continuous Laplacian
operator.

This paper is organized as follows. In Section \ref{s:model} we
introduce the basic model problem and some related results on the
stability and approximation property of $L^2$-projection onto finite
element space. In Section \ref{s:min-max}, we show a generalized
min-max principle for selfadjoint operators on separable Hilbert
space. This is an important tool, used in the proof of our main
result. In Section \ref{s:algebraic-spectral}, we recall Weyl's law
for the Laplacian operator on Sobolev spaces and prove the main
eigenvalue asymptotic estimate for the discretized Laplacian operator.
In Section \ref{s:error}, we prove an error estimate for finite
element approximation of 2nd-order elliptic eigenvalue problems.

\section{Model elliptic PDE operators and finite element discretization}\label{s:model}
We consider the following boundary value problems
\begin{equation}
  \label{Model0}
    {\mathcal L}u=-\nabla\cdot (\alpha(x)\nabla u)+ q(x) u=f, \quad x\in \Omega
\end{equation}
where $\alpha: \Omega\mapsto \mathbb R_{{\rm sym}}^{d\times d}$ is a
matrix valued function taking values in the set of $d\times d$,
symmetric, positive definite matrices.
\begin{equation}
  \label{alpha}
\alpha_0\|\xi\|^2\le
\xi^T\alpha (x)\xi \le
\alpha_1\|\xi\|^2,\quad\mbox{for all}\quad \xi\in \mathbb R^d,
\end{equation}
for some positive constants $\alpha_0$ and $\alpha_1$ and all $x\in
\Omega$.  The potential $q(x)$ is assumed to be bounded and
non-negative for almost all $x\in \Omega$. Here, $d=1,2,3$ and
$\Omega\subset\mathbb R^d$ is a bounded domain with sufficiently
smooth (Lipschitz) boundary $\Gamma=\partial \Omega$.

The variational formulation of \eqref{Model0} is: Find
$u\in V$ such that
\begin{equation}
  \label{Vari}
a(u,v)=(f, v), \quad\forall v\in V.
\end{equation}
where the bilinear form $a(\cdot,\cdot)$ and the linear form
$(f,\cdot)$ are defined as
\begin{eqnarray*}
&& a(u,v)=\int_\Omega (\alpha(x)\nabla u)\cdot \nabla v
 + \int_{\Omega} q\;uv,\\
&& (f,v)= \int_\Omega fv.
\end{eqnarray*}
The Sobolev space $V$ that can be chosen according to the
boundary conditions accompanying the equation~\eqref{Model0}.
For example, in the case of mixed boundary conditions:
\begin{equation}
  \label{MixedBoundary}
  \begin{array}{rcl}
u=&0, &x\in \Gamma_D,\\
(\alpha\nabla u)\cdot n=&0,&x \in\Gamma_N,
\end{array}
\end{equation}
where
$\Gamma=\Gamma_D\cup\Gamma_N$.  The pure Dirichlet problem is when
$\Gamma_D = \Gamma$ while the pure Neumann problem is when $\Gamma_N
=\Gamma$. We then have $V$ defined as
\begin{equation}
  \label{3V}
V=\left\{
  \begin{array}{l}
    H^1(\Omega) = \{v\in L^2(\Omega): \partial_iv\in    L^2(\Omega), i=1:d\};\\
    H^1_D(\Omega) = \{v\in H^1(\Omega): v|_{\Gamma_D}=0\}.
  \end{array}
\right.
\end{equation}
When we consider a pure Dirichlet problem, $\Gamma_D = \Gamma$, we
denote the space by $V=H^1_0(\Omega)$. In addition, for pure Neumann
boundary conditions,  when $q=0$ the condition
\(\label{consistent-f} \int_\Omega f =0\) is usually added to assure
uniqueness of the solution to~\eqref{Vari}. We note that $a(.,.)$ defines and inner product on $V$ and 
\[
|\cdot|^2_a:=a(u,u),
\]
is a norm (or a seminorm) on $V$.

One most commonly used model problem is when $\alpha(x)=I$, and
$q(x)=0$ for all $x\in \Omega$, which corresponds to the
Poisson equation
\begin{equation}
  \label{Poisson}
-\Delta u=f.
\end{equation}
This simple problem provides a good representative model for isotropic
problems. In the following discussion, we assume
$\alpha(x) = I$  and $q(x)=0$. We further remark
that the results carry over to other isotropic elliptic
equations by spectral equivalence.

Given a triangulation ${\mathcal T}_h$ of $\Omega$, let $V_h\subset V$
be finite element space consisting of piecewise linear (or higher
order) polynomials with respect to this triangulation
${\mathcal T}_h$.  By triangulation here we mean $d$-homogenous
simplicial complex in $\mathbb{R}^d$ which covers $\Omega$.  The
finite element approximation of the variational problem \eqref{Vari} then
is: Find $u_h\in V_h$ such that
\begin{equation}
  \label{vph}
a(u_h,  v_h)=(f,v_h), \quad\forall\,v_h\in V_h.
\end{equation}
Given a basis $\{\phi_i\}_{i=1}^{N}$ in $V_h$,  we write
\(u_h(x)=\sum_{j=1}^{N}\mu_j\phi_j(x) \)
the equation \eqref{vph}  is then equivalent to
\[
        \sum_{j=1}^{N}\mu_ja(\phi_j,\phi_i)=(f,\phi_i),\quad
        j=1,2,\cdots, N,
\]
which is a linear system of equations:
\begin{equation}\label{axb}
        A\mu=b, \quad (A)_{ij} = a(\phi_j,\phi_i), \quad
 \mbox{and}\quad (b)_i=(f,\phi_i).
\end{equation}
Here, the matrix $A$ is known as the stiffness matrix of the  nodal basis
$\{\phi_i  \}_{i=1}^N$.

With any simplex $T\in \mathcal T_h$, we associate  the following geometric characteristics:
\begin{equation}\label{hT}
  \begin{array}{l}
    \displaystyle \overline h_T=\mbox{\rm diam\,}(T),\quad h_T=|T|^{\frac{1}{d}}, \quad  h=\max_{T\in \mathcal T_h} \overline h_T,
\\
    \displaystyle
    \underline h_T=2\sup\{r>0: B(x, r)\subset T \text{ for } x\in T\}.
    \end{array}
\end{equation}
In the following discussion, we need the definition of
\emph{quasi-uniform} finite element mesh and we recall this next.
\begin{definition}[Quasi-uniform mesh]
  We say that the mesh $\mathcal T_h$ is \emph{quasi-uniform} if there
  exists a constant $C > 0$ such that
    \begin{equation}
        \max_{T\in \mathcal T_h} \frac{h}{\underline h_T} \le C.
    \end{equation}
\end{definition}

We assume that we have a projection $\Pi_h: V\mapsto V_h$, satisfying
\begin{equation}\label{stability}
            |\Pi_hv|_{a}^2 \le c_1|v|_{a}^2, \quad \forall v\in V,
\end{equation}
and
\begin{equation}\label{approximation}
\|v-\Pi_hv\|_0^2 \le c_2h^2|v|_{a}^2,
          \quad \forall v\in V,
        \end{equation}
with $c_1$ and $c_2$ being constants independent of $h$ and $v$.

One example of $\Pi_h$ is the $L^2$ projection $Q_h:V\mapsto V_h$ defined by
\begin{equation}\label{Qh}
(Q_hv, w)_{L^2(\Omega)} = (v, w)_{L^2(\Omega)}, \quad \forall v\in V, w\in V_h.
\end{equation}
and a proof that $Q_h$ satisfies \eqref{stability} and \eqref{approximation} can be found in~\cite{1992XuJ-aa}, \cite{1991BrambleJ_XuJ-aa}, and \cite{2014BankR_YserentantH-aa}.

Another example of $\Pi_h$ is the elliptic projection $P_h: V\mapsto V_h$ defined as
\[\label{Ph}
    a(P_hv, w) = a(v, w), \quad \forall v\in V, w\in V_h.
\]
It is well-known that the elliptic projection
$P_h$ satisfies \eqref{stability} with $c_1=1$,
and, moreover, it also satisfies
\eqref{approximation} if the weak
solution to the differential equation~\eqref{Model0}
is $H^2$-regular, namely, there exists a constant $C_r$ such that
\begin{equation}
    |u|_2^2 \le C_r\|g\|_0^2,
\end{equation}
where $u$ is the solution of
\[
    a(w, u) = (f, w), \quad \forall w\in V.
\]

Next section discusses the min-max principle in (finite dimensional) Hilbert space
\section{On the min-max principle}\label{s:min-max}

The well known min-max principle for eigenvalues of symmetric matrices
was probably first stated and proved in~\cite{courant1924methoden}. We
state it as the following theorem.
\begin{theorem}[Min-max principle A]\label{thm:minmax}
  Let $A$ be a $n\times n$ symmetric matrix with eigenvalues
  $\lambda_{1}\le \lambda_2\le \cdots \le \lambda_n$, then
    \begin{equation}\label{minmax}
        \lambda_k=\min_{\dim W=k}\max_{x\in W, x\ne 0}\frac{(Ax, x)}{(x, x)},
    \end{equation}
    and
    \begin{equation}\label{maxmin}
        \lambda_k=\max_{\dim W=n-k+1}\min_{x\in W, x\ne 0}\frac{(Ax, x)}{(x, x)}.
    \end{equation}
\end{theorem}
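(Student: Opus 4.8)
The plan is to reduce everything to the spectral theorem together with a single dimension-counting argument. Since $A$ is symmetric, I fix an orthonormal eigenbasis $v_1,\dots,v_n$ with $Av_i=\lambda_iv_i$, and write the Rayleigh quotient as $R(x)=(Ax,x)/(x,x)$. Expanding $x=\sum_i c_iv_i$ gives the identity $R(x)=\left(\sum_i \lambda_ic_i^2\right)/\left(\sum_i c_i^2\right)$, so $R(x)$ is a convex combination (weighted average) of the $\lambda_i$ with weights proportional to $c_i^2$. This elementary observation is what drives both inequalities, since restricting $x$ to a span of eigenvectors restricts which $\lambda_i$ can appear in the average.

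First I would establish \eqref{minmax} by proving two matching bounds. For the upper bound I use the explicit test space $W=\Span\{v_1,\dots,v_k\}$: every nonzero $x\in W$ has $R(x)$ equal to an average of values drawn only from $\lambda_1,\dots,\lambda_k$, all of which are $\le\lambda_k$, so $\max_{x\in W,\,x\ne0}R(x)\le\lambda_k$. Hence the minimum over all $k$-dimensional subspaces is at most $\lambda_k$.

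For the lower bound I would show that \emph{every} $k$-dimensional subspace $W$ contains a vector whose Rayleigh quotient is at least $\lambda_k$. Set $U=\Span\{v_k,v_{k+1},\dots,v_n\}$, which has dimension $n-k+1$. Because $\dim W+\dim U=n+1>n$, the subspaces must intersect nontrivially, so there is a nonzero $x\in W\cap U$; expanding $x$ in $v_k,\dots,v_n$ shows $R(x)$ is an average of values all $\ge\lambda_k$, whence $R(x)\ge\lambda_k$. Therefore $\max_{x\in W,\,x\ne0}R(x)\ge\lambda_k$ for every such $W$, so the minimum is at least $\lambda_k$. Combining the two bounds proves \eqref{minmax}.

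The main obstacle, and the only genuinely nonroutine step, is this nontrivial-intersection claim: two subspaces of $\mathbb{R}^n$ whose dimensions sum to more than $n$ must meet in a nonzero vector. Everything else is bookkeeping with the weighted-average identity. Finally, rather than repeat the argument, I would deduce \eqref{maxmin} from \eqref{minmax} applied to $-A$, whose ordered eigenvalues are $-\lambda_n\le\cdots\le-\lambda_1$. Since $\max_{x\in W}\tfrac{(-Ax,x)}{(x,x)}=-\min_{x\in W}R(x)$, reading off the formula \eqref{minmax} for the $(n-k+1)$-th eigenvalue of $-A$ and negating turns the min-max over $(n-k+1)$-dimensional spaces into exactly the asserted max-min expression for $\lambda_k$.
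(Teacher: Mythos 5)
Your proposal is correct, and it supplies something the paper itself does not: the paper states Theorem~\ref{thm:minmax} without proof, attributing it to Courant, and reserves its actual proof work for the Hilbert-space version (Min-max principle B). Your argument for \eqref{minmax} is, in substance, the finite-dimensional specialization of that proof: the upper bound via the test space $\operatorname{span}\{v_1,\dots,v_k\}$ and the lower bound via a nonzero vector in $W\cap\operatorname{span}\{v_k,\dots,v_n\}$ are exactly the two steps the paper uses for its Theorem B. The one structural difference is how the intersection is obtained. You get it for free from the dimension count $\dim W+\dim U=n+1>n$, which is all that is needed in $\mathbb{R}^n$; the paper cannot do this in infinite dimensions (the space $V_{k+}$ there has infinite dimension, so no counting argument applies), and instead proves the intersection property as a separate result, Lemma~\ref{l:intersection}, via an orthogonal projection onto $\operatorname{span}\{\varphi_1,\dots,\varphi_k\}$ and a nonsingularity argument for the $k\times k$ matrix $C_{ij}=(\varphi_j,\psi_i)$. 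So your proof is more elementary but strictly finite-dimensional, while the paper's lemma buys generality. Finally, your deduction of \eqref{maxmin} by applying \eqref{minmax} to $-A$ (whose $j$-th smallest eigenvalue is $-\lambda_{n-j+1}$, so that the index $j=n-k+1$ recovers $-\lambda_k$, and $\max$ of the negated Rayleigh quotient is $-\min$ of the original) is correct and is a genuine addition: the paper never proves the max-min identity \eqref{maxmin} anywhere, neither in finite nor in infinite dimensions.
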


We now consider the case when $A: X \mapsto V$ is a selfadjoint
operator in a Hilbert space $V$. We assume that the domain of $A$,
is $X \subset V$ and $X$ is dense in $V$.

The following lemma is used in the proof of the min-max principle. The result seems obvious (and is obvious in finite dimensional space).
\begin{lemma}\label{l:intersection} Let $V$ be a separable Hilbert space
  with orthonormal
  basis $\{\varphi_j\}_{j=1}^{\infty}$ and let for a fixed integer $k\ge 1$,
\begin{equation}
V_{k+} = \operatorname{span}\{\varphi_j\}_{j=k}^\infty =
\operatorname{span}\{\varphi_k,\varphi_{k+1},\ldots\}.
\end{equation}
If $W \subset V$ is any subspace of dimension $k$, then
\begin{equation}
W \cap V_{k+} \neq \{0\}.
\end{equation}
\end{lemma}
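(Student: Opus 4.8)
The plan is to reduce this infinite-dimensional statement to an elementary rank-nullity argument by projecting onto the finite-dimensional complement of $V_{k+}$. First I would introduce the finite-dimensional subspace $V_{(k-1)} = \operatorname{span}\{\varphi_1, \ldots, \varphi_{k-1}\}$ (interpreting the empty span as $\{0\}$ when $k=1$), and note that, because $\{\varphi_j\}_{j=1}^{\infty}$ is an orthonormal basis, $V_{k+}$ coincides with the orthogonal complement $V_{(k-1)}^{\perp}$; that is, $v \in V_{k+}$ if and only if $(v, \varphi_j) = 0$ for all $j = 1, \ldots, k-1$.

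Next I would consider the orthogonal projection $P : V \mapsto V_{(k-1)}$, given explicitly by $Pv = \sum_{j=1}^{k-1} (v, \varphi_j)\varphi_j$. The key point is that $P$ has finite rank: its range is contained in the $(k-1)$-dimensional space $V_{(k-1)}$. Restricting $P$ to the given $k$-dimensional subspace $W$ yields a linear map $P|_W : W \mapsto V_{(k-1)}$ between finite-dimensional spaces, and at this stage the infinite dimensionality of the ambient space $V$ plays no further role.

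The heart of the argument is then pure dimension counting. Since $\dim W = k$ while $\dim \operatorname{range}(P|_W) \le \dim V_{(k-1)} = k-1$, the rank-nullity theorem forces $\dim \ker(P|_W) \ge k - (k-1) = 1$, so there exists a nonzero $w \in W$ with $Pw = 0$. By the characterization in the first step, $Pw = 0$ means $(w, \varphi_j) = 0$ for every $j < k$, i.e.\ $w \in V_{k+}$. Hence $0 \neq w \in W \cap V_{k+}$, which is the assertion of the lemma.

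I do not expect a genuine obstacle here; the only point requiring a little care is the identification of $V_{k+}$ with $V_{(k-1)}^{\perp}$, where one should read $V_{k+}$ as the \emph{closed} span of $\{\varphi_k, \varphi_{k+1}, \ldots\}$ rather than the merely algebraic span. This ensures that the element $w$ produced by the kernel argument, which satisfies only the orthogonality conditions $(w, \varphi_j) = 0$ for $j < k$, indeed lies in $V_{k+}$. Once $V_{k+}$ is interpreted as this orthogonal complement, the proof is complete with no estimates needed.
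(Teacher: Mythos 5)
Your proof is correct, but it takes a cleaner route than the paper's. The paper projects $W$ onto the $k$-dimensional space $V_{k-}=\operatorname{span}\{\varphi_1,\ldots,\varphi_k\}$, and since a map between two $k$-dimensional spaces can be injective, this forces a two-case analysis: either the restricted projection has a nontrivial kernel (done at once), or it is injective, in which case the $k\times k$ matrix $C_{ij}=(\varphi_j,\psi_i)$ built from a basis $\{\psi_i\}$ of $W$ is nonsingular and one solves $Cy=e_k$ to manufacture an element of $W$ whose inner products with $\varphi_1,\ldots,\varphi_{k-1}$ vanish and whose inner product with $\varphi_k$ equals one. You instead project onto the $(k-1)$-dimensional space $\operatorname{span}\{\varphi_1,\ldots,\varphi_{k-1}\}$, so rank-nullity immediately produces a nonzero $w\in W$ with $(w,\varphi_j)=0$ for $j<k$; no case split, no matrix construction, and the argument is uniform in $k$ (including $k=1$, where the target space is $\{0\}$). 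What the paper's longer Case 2 buys is a slightly more explicit element (normalized $k$-th coefficient), but that extra information is never used. Your caveat about reading $V_{k+}$ as the \emph{closed} span of $\{\varphi_k,\varphi_{k+1},\ldots\}$ is exactly right, and the paper tacitly makes the same identification when it writes $\psi=\sum_{i=k}^{\infty}(\varphi_i,\psi)\varphi_i$ and concludes $\psi\in V_{k+}$; both proofs need this to place the constructed element in $V_{k+}$.
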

\begin{proof}
Let $\{\psi\}_{j=1}^k$ be a basis in $W$ and
let $Q:V\mapsto V_{k-}$ be the orthogonal projection on $V_{k-}$ where
\[
V_{k-} = \operatorname{span}\{\varphi_j\}_{j=1}^k
=\operatorname{span}\{\varphi_{1},\ldots,\varphi_{k}\}.
\]
Notice that $\dim V_{k-} = \dim W = k$ and that
\[
Q v = \sum_{j=1}^k(\varphi_j,v)\varphi_j.
\]
We consider two cases:
\begin{itemize}
  \item[\textsf{Case 1:}] There exists a $\psi \in W$, $\psi\neq 0$, such that
    $Q\psi = 0$;
  \item[\textsf{Case 2:}] For all $\psi\in W$, $\psi\neq 0$ we have
    $Q\psi\neq 0$.
\end{itemize}
In the first case, as $Q\psi=0$ we obtain that
\begin{eqnarray*}
&&W \ni \psi = (I-Q)\psi \in V_{k+}, \quad\mbox{and hence}\\
&&W\cap V_{k+} \supset \{\psi\}\neq \{0\},
\end{eqnarray*}
which shows the result of the lemma in the case when the null-space of $Q$ is non-trivial.

Consider now the second case, namely, $Q\psi\neq 0$ for all
$\psi\in W$. This implies that the matrix $C_{ij}=(\varphi_j,\psi_i)$,
$i,j=1,\ldots,k$ is nonsingular. Indeed, if this matrix is singular,
so is its transpose. Let $x\in \mathbb{R}^k$ be such that $C^Tx=0$,
i.e.
\[
C^Tx =0 = \sum_{j=1}^k (\varphi_i,\psi_j)x_j = 0, \quad i=1,\ldots,k,
\]
Thus, for $\psi = \sum_{j=1}^k x_j \psi_j$ we have
$(\varphi_i,\psi)=0$ for $i=1,\ldots,k$ and hence $Q \psi = 0$ where
$\psi \in W$ and $\psi \neq 0$.

Further, as $C$ is nonsingular, it follows that there exists $y\in \mathbb{R}^k$ such that
$C y = e_k$, $e_k = (0,\ldots,0,1)^T$. Therefore, with
\begin{equation}
  \psi = \sum_{j=1}^ky_j\psi_j,\label{e:wk}
\end{equation}
we have
\[
(\varphi_k,\psi) = 1, \quad\mbox{and}\quad
(\varphi_i,\psi) = 0,\quad i=1,\ldots,(k-1).
\]
Finally, these identities imply that
\begin{eqnarray*}
\psi & = & \sum_{i=1}^{\infty} (\varphi_i,\psi) \varphi_i\\
 & = &   \varphi_k + \sum_{i=1}^{k-1} (\varphi_i,\psi) \varphi_i
+\sum_{i=k+1}^{\infty} (\varphi_i,\psi) \varphi_i \\
& = & 
\sum_{i=k}^{\infty} (\varphi_i,\psi) \varphi_i.
\end{eqnarray*}
The right side of the identity above shows that $\psi\in V_{k+}$ and by~\eqref{e:wk}
we have $\psi \in W$ which completes the proof.
\end{proof}

We define the following set of $k$-dimensional subspaces:
\begin{equation}
\mathcal{W}_k = \left\{W\subset X \;\big|\; \dim W = k \right\}.
\end{equation}
The theorem which we prove next, shows also a min-max principle~\cite{weinstein1972methods,weinberger1974variational}. Its
proof is elementary, and relies on Lemma~\ref{l:intersection}.
\begin{theorem}[Min-max principle B]
  Let $V$ be a Hilbert space and $X\subset V$ is a dense subset of it.
  Let us assume that the eigenvectors $\{\varphi_j\}_{j=1}^{\infty}$
  of a selfadjoint operator $A: X \mapsto V$ form a complete
  orthonormal basis for $V$.  We then have the following min-max
  identity:
\begin{equation}
\lambda_k = \min_{W\in \mathcal{W}_k}\sup_{v\in W}\frac{(Av,v)}{(v,v)}
\end{equation}
\end{theorem}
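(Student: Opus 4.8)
The plan is to prove the identity by verifying the two inequalities
$$
\min_{W\in\mathcal W_k}\sup_{v\in W}\frac{(Av,v)}{(v,v)}\le \lambda_k
\quad\text{and}\quad
\min_{W\in\mathcal W_k}\sup_{v\in W}\frac{(Av,v)}{(v,v)}\ge \lambda_k,
$$
which is the classical Courant--Fischer strategy; the only ingredient that does not transfer verbatim from the matrix case is the nonemptiness of a certain intersection, and this is precisely what Lemma~\ref{l:intersection} supplies.

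For the upper bound I would exhibit a single admissible competitor. Take $W_0 = \operatorname{span}\{\varphi_1,\ldots,\varphi_k\}$, which lies in $X$ because each eigenvector $\varphi_j$ belongs to the domain of $A$; thus $W_0\in\mathcal W_k$. For $v=\sum_{j=1}^k c_j\varphi_j\in W_0$ the orthonormality of the $\varphi_j$ together with the ordering $\lambda_1\le\cdots\le\lambda_k$ gives $(Av,v)=\sum_{j=1}^k\lambda_j c_j^2\le \lambda_k\sum_{j=1}^k c_j^2=\lambda_k(v,v)$, with equality at $v=\varphi_k$. Hence $\sup_{v\in W_0}(Av,v)/(v,v)=\lambda_k$, and the minimum over $\mathcal W_k$ is at most $\lambda_k$.

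For the lower bound I would take an arbitrary $W\in\mathcal W_k$ and apply Lemma~\ref{l:intersection} to produce a nonzero $v\in W\cap V_{k+}$, where $V_{k+}=\operatorname{span}\{\varphi_j\}_{j=k}^\infty$. Writing $v=\sum_{j\ge k}c_j\varphi_j$, the same spectral computation now runs in the opposite direction: $(Av,v)=\sum_{j\ge k}\lambda_j c_j^2\ge \lambda_k\sum_{j\ge k}c_j^2=\lambda_k(v,v)$, so that $\sup_{w\in W}(Aw,w)/(w,w)\ge (Av,v)/(v,v)\ge\lambda_k$. Since $W$ was arbitrary, the minimum is at least $\lambda_k$, and combining the two bounds closes the proof.

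The main obstacle lies entirely in the infinite-dimensional bookkeeping rather than in the algebra. First, one must ensure that the vector $v$ furnished by the lemma genuinely lies in the domain $X$, so that $Av$ is defined; this is automatic, since $v\in W\subset X$. Second, the identity $(Av,v)=\sum_{j}\lambda_j c_j^2$ must be justified as a convergent series: membership $v\in X$ forces $\sum_j\lambda_j^2 c_j^2<\infty$, which legitimizes the term-by-term evaluation of $(Av,v)$ against the orthonormal expansion of $v$ and makes both Rayleigh-quotient estimates rigorous. Once these convergence matters are dispatched, the two inequalities follow exactly as above.
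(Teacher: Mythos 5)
Your proposal is correct and follows essentially the same route as the paper's proof: the upper bound via the competitor $\operatorname{span}\{\varphi_1,\ldots,\varphi_k\}$, and the lower bound by applying Lemma~\ref{l:intersection} to intersect an arbitrary $W\in\mathcal{W}_k$ with $V_{k+}$ and then estimating the Rayleigh quotient spectrally. Your added remarks on domain membership and series convergence are a welcome refinement of details the paper leaves implicit, but they do not change the argument's structure.
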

\begin{proof}
Let $\{[\lambda_j,\varphi_j]\}_{j=1}^{\infty}$ be the eigenpairs of $A$ with
$\lambda_{1}\le\lambda_2\le\ldots\le\lambda_k\le\ldots$ and let
$W_{k-} = \operatorname{span}\{\varphi_k\}_{j=1}^{k}$.  We have that $W_{k-}\subset X$, because
$A\varphi_j = \lambda_j\varphi_j$, $j=1,\ldots,k$. Next, for any $v\in W_{k-}$ we obtain
\[
\frac{(Av,v)}{(v,v)} \le
\frac{\lambda_k(v,v)}{(v,v)}=\lambda_k\quad\Rightarrow\quad
\sup_{v\in W_{k-}}\frac{(Av,v)}{(v,v)} \le \lambda_k.
\]
Next, let $W\in \mathcal{W}_k$ be a space of dimension $k$ and we denote
$V_{k+} = \operatorname{span}\{\varphi_k\}_{j=k}^{\infty}$.
By Lemma~\ref{l:intersection} we have that there exists $\psi_W\neq 0$ such that
$\psi_W\in V_{k+} \cap W$. Hence,
\begin{eqnarray*}
\frac{(A\psi_W,\psi_W)}{(\psi_W,\psi_W)} & = &
\frac{\sum_{j=k}^{\infty}\lambda_j(\varphi_i,\psi_W)^2}{\sum_{j=k}^{\infty}(\varphi_i,\psi_W)^2}\\
 & \ge &
\frac{\lambda_k\sum_{j=k}^{\infty}(\varphi_i,\psi_W)^2}{\sum_{j=k}^{\infty}(\varphi_i,\psi_W)^2}
=\lambda_k.
\end{eqnarray*}
Taking the infimum over all spaces in $\mathcal{W}_k$ and we have:
\begin{eqnarray*}
\lambda_k & \le &
\min_{W\in \mathcal{W}_k}\frac{(A\psi_W,\psi_W)}{(\psi_W,\psi_W)}
 \le 
\min_{W\in \mathcal{W}_k}\sup_{v\in W}\frac{(A v,v)}{(v,v)}\\
& \le & \sup_{v\in W_{k-}}\frac{(A v,v)}{(v,v)}  \le  \lambda_k.
\end{eqnarray*}
which completes the proof.
\end{proof}

\section{Spectral properties of discretized elliptic operators}\label{s:algebraic-spectral}
We now discuss the spectral properties of the operator ${\mathcal L}$
given in \eqref{Model0}.  The following theorem, regarding the
eigenfunctions and eigenvalues of $\mathcal{L}$ is well-known
consequence of the Hilbert-Schmidt theorem for compact operators. Its
proof is found, for example, in~\cite{MR1892228,MR2987297}.

\begin{theorem}
  The operator ${\mathcal L}$ given in~\eqref{Model0} has a complete
  set of eigenfunctions $(\varphi_k)$ and nonnegative eigenvalues
$$
0\le\lambda_{1}\le\lambda_2\le \ldots
$$
such that
$$
{\mathcal L}\varphi_k=\lambda_k\varphi_k, \quad k=1, 2, 3\ldots.
$$
\begin{enumerate}
\item $\lim_{k\to\infty}\lambda_k=\infty.$
\item $(\varphi_i)$ forms an orthonormal basis of $V$ as well as for
  $L^2(\Omega)$.
\end{enumerate}
\end{theorem}
We also note that when $q=0$ and $\alpha(x)=I$ we have
\begin{enumerate}
\item For pure Neumann problem, $\lambda_{1}=0$ and $\varphi_{1}$ is
the  constant function.
\item For pure Dirichlet problem, $\lambda_{1}>0$ is simple and
  $\varphi_{1}$ does not change sign.
\end{enumerate}

For the case of Laplacian operator $\mathcal{L}=(-\Delta)$, we have
the well-known Weyl's estimate on the asymptotic behavior of its
eigenvalues, as shown
in~\cite{weyl1911asymptotische,wbyii1912asymptotische,reed1978iv}.
\begin{lemma}[Weyl's law]\label{lem:Weyl}
  Assume that $\Omega$ is contented. Then for the homogeneous Dirichlet
  problem,  the eigenvalues of the Laplacian operator satisfy:
\begin{equation}\label{Weyl0}
    \lim_{k\rightarrow\infty}\frac{\lambda_k}{k^{\frac{2}{d}}}=w_{\Omega},
    \mbox{ with }
    \quad w_{\Omega}=\frac{(2\pi)^2}{[\omega_d \operatorname{Vol}(\Omega)]^{\frac{2}{d}}},
\end{equation}
where $\omega_d$ is a volume of the unit ball in $\mathbb{R}^d$, and
the eigenvalues of the operator $\mathcal L$ given in \ref{Model0}
satisfy:
\begin{equation}\label{Weyl}
(\alpha_0w_{\Omega})k^{\frac2d}\le \lambda_k\le(\alpha_1 w_{\Omega})k^{\frac2d}, \quad \forall k\ge 1.
\end{equation}
\end{lemma}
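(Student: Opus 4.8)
The plan is to treat the two displayed statements separately. Identity \eqref{Weyl0} is the classical Weyl asymptotic law for the Dirichlet Laplacian, and I would simply invoke it as the cited result of Weyl~\cite{weyl1911asymptotische}; no new argument is needed there. The content to be established is the two-sided bound \eqref{Weyl} for the variable-coefficient operator $\mathcal L = -\nabla\cdot(\alpha(x)\nabla\,\cdot\,)$, and my strategy is to deduce it from \eqref{Weyl0} by a spectral-equivalence comparison built on Min-max principle B together with the uniform ellipticity bounds \eqref{alpha}. Throughout I write $\mu_k$ for the $k$-th Dirichlet eigenvalue of $-\Delta$ (so that \eqref{Weyl0} reads $\mu_k/k^{2/d}\to w_{\Omega}$) and reserve $\lambda_k$ for the $k$-th eigenvalue of $\mathcal L$.

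The key step is a pointwise comparison of Rayleigh quotients. For $v\in V=H^1_0(\Omega)$ the quotient governing $\mathcal L$ is $a(v,v)/(v,v)$ with $a(v,v)=\int_\Omega(\alpha\nabla v)\cdot\nabla v$, whereas the one governing $-\Delta$ has numerator $|v|_1^2=\int_\Omega|\nabla v|^2$. Assumption \eqref{alpha} gives, for every such $v$, the sandwich $\alpha_0|v|_1^2\le a(v,v)\le\alpha_1|v|_1^2$. Consequently, for any fixed subspace $W\in\mathcal W_k$,
\[
\alpha_0\sup_{v\in W}\frac{|v|_1^2}{(v,v)}\;\le\;\sup_{v\in W}\frac{a(v,v)}{(v,v)}\;\le\;\alpha_1\sup_{v\in W}\frac{|v|_1^2}{(v,v)}.
\]
Taking the infimum over all $W\in\mathcal W_k$ and applying Min-max principle B to each operator (their eigenfunctions form complete orthonormal bases of $V$ by the spectral theorem quoted above) yields $\alpha_0\mu_k\le\lambda_k\le\alpha_1\mu_k$ for all $k\ge 1$.

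It then remains to insert Weyl's law. By \eqref{Weyl0} the positive sequence $\mu_k/k^{2/d}$ converges to $w_{\Omega}>0$, hence is bounded above and below by positive constants; together with its finitely many initial terms this shows $c_0 k^{2/d}\le\mu_k\le c_1 k^{2/d}$ for all $k\ge 1$ with $c_0,c_1$ proportional to $w_{\Omega}$. Combining with $\alpha_0\mu_k\le\lambda_k\le\alpha_1\mu_k$ gives two-sided bounds of the form $(\alpha_0 c_0)k^{2/d}\le\lambda_k\le(\alpha_1 c_1)k^{2/d}$ valid for every $k$. This is \eqref{Weyl} once the constant $w_{\Omega}$ there is read as the limiting (asymptotic) value: the clean identity with the exact $w_{\Omega}$ holds only as $k\to\infty$, since Weyl's law does not force $\mu_k/k^{2/d}=w_{\Omega}$ at finite $k$. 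I would flag this quantifier as the one genuinely delicate point in the statement, and note that the robust, sharp conclusion surviving for all $k$ is the comparison $\alpha_0\mu_k\le\lambda_k\le\alpha_1\mu_k$.

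I expect the main obstacle to be the justification of applying Min-max principle B to the unbounded operators $\mathcal L$ and $-\Delta$: the principle as stated concerns a selfadjoint operator with dense domain $X\subset V$, whereas the Rayleigh quotients above live naturally on the form domain $H^1_0(\Omega)$. I would resolve this by verifying that the infimum over $k$-dimensional subspaces may be restricted to the form domain without changing its value, the point being that the eigenfunctions $\varphi_j$ lie in $X$, so the minimizing subspace $\Span\{\varphi_1,\dots,\varphi_k\}$ is admissible in $\mathcal W_k$ and realizes the min-max value for each operator. Once this density/closure issue is settled, the remaining steps are the elementary monotonicity of suprema and the substitution of the cited asymptotics.
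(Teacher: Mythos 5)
Your proposal is correct in substance, and it actually does more than the paper, which offers no proof of this lemma at all: the paper simply cites Weyl's papers and Reed--Simon for \eqref{Weyl0}, and leans on its earlier remark that results for $-\Delta$ ``carry over to isotropic elliptic equations by spectral equivalence'' to cover \eqref{Weyl}. Your spectral-equivalence argument --- sandwiching the Rayleigh quotient of $a(\cdot,\cdot)$ between $\alpha_0$ and $\alpha_1$ times that of $-\Delta$ via \eqref{alpha}, then taking $\inf\sup$ over $k$-dimensional subspaces --- is precisely the intended route, and your concern about form domain versus operator domain (the comparison is cleanest over subspaces of $H_0^1(\Omega)$, which is the common form domain of both operators; this is also how the paper itself later invokes the min-max principle in the proof of Theorem~\ref{thm:WeylFE}) is the right way to make the min-max comparison legitimate.

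The ``delicate point'' you flagged is in fact a genuine flaw in the paper's statement, not in your argument. The bound \eqref{Weyl} with the exact constants $\alpha_0 w_\Omega$, $\alpha_1 w_\Omega$ for \emph{all} $k\ge 1$ does not follow from the asymptotic relation \eqref{Weyl0}, and it is false as written: for $\alpha=I$ on the square $\Omega=(0,\pi)^2$ one has $\lambda_1=2$ while $w_\Omega = 4/\pi < 2$, so the upper bound already fails at $k=1$ (for tiling domains P\'olya's inequality gives the lower bound with the exact constant, but the upper bound cannot hold with $w_\Omega$). The correct statement --- and the only thing the rest of the paper uses, cf.~\eqref{discrete-Weyl} --- is $\gamma_0 k^{2/d}\le \lambda_k\le \gamma_1 k^{2/d}$ for all $k$, with $\gamma_0=\alpha_0\inf_k \mu_k k^{-2/d}>0$ and $\gamma_1=\alpha_1\sup_k \mu_k k^{-2/d}<\infty$, where $\mu_k$ denote the Dirichlet eigenvalues of $-\Delta$; these constants are comparable to, but not equal to, $w_\Omega$, and both are finite and positive precisely because of \eqref{Weyl0}. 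This is exactly what your last two paragraphs establish, so your write-up needs no repair; if anything, it should be promoted to a corrected statement of the lemma.
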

We recall that, by definition, $\Omega\subset \mathbb{R}^d$ is a
\emph{contented domain} if it can be approximated as close as we
please by unions of $d$-dimensional cubes (see
\cite[p.~271]{reed1978iv} for the precise statement of such
definition).  Since finite element method is often used to discretize
problems on Lipschitz polyhedral domains, we note that
in~\cite{MR1158660} it was shown that all Lipschitz polyhedrons are
contented domains.

\begin{remark}
    We remark that by the Min-max principle (Theorem~\ref{thm:minmax}) the
    asymptotic behavior of $\{\lambda_k\}$ is of the same order with respect to $k$ when $q(x)\ge 0$ is bounded coefficient and a matrix valued
    (symmetric and positive definite) $\alpha(x)$. 
\end{remark}

In the following theorem, we extend Weyl's law stated in
Lemma~\ref{lem:Weyl} to the discretized Laplacian operator defined
in~\eqref{vph}.
\begin{theorem}\label{thm:WeylFE}
  Let $V_h\subset H_0^1({\Omega})$ be a family of finite element
  spaces on a quasi-uniform mesh with $\dim V_h = N$.  Consider the discretized operator of \eqref{Model0}
\[
    \mathcal L_h: V_h\mapsto V_h, \quad (\mathcal L_h u, v) = a(u, v), \quad \forall u, v\in V_h,
\]
and its eigenvalues:
\[
\lambda_{h,1}\le \lambda_{h,2}\le \cdots \le\lambda_{h,N}.
\]
Then, for all
  $1\le k\le N$, there exists a constant $C_w>0$ independent of $k$
  such that we have the following estimates:
    \begin{equation}\label{e:discrete-continuous}
        \lambda_k\le\lambda_{h,k}\le C_w\lambda_k.
    \end{equation}
and
    \begin{equation}\label{discrete-Weyl}
        \gamma_0 k^{2/d}\le\lambda_{h,k}\le \gamma_1k^{2/d}.
    \end{equation}
\end{theorem}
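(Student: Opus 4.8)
The plan is to prove the two-sided estimate \eqref{e:discrete-continuous} first and then read off the discrete Weyl law \eqref{discrete-Weyl} by combining it with Lemma~\ref{lem:Weyl}. Throughout I use that for $\alpha(x)=I$ one has $a(v,v)=|v|_1^2$ and $(\mathcal L_h v,v)=a(v,v)$, so that both the continuous and the discrete eigenvalues are governed by the same Rayleigh quotient $a(v,v)/\|v\|_0^2$. Since $\mathcal L_h$ is selfadjoint on the finite dimensional space $V_h$, Theorem~\ref{thm:minmax} gives
\[
\lambda_{h,k}=\min_{\substack{W\subset V_h\\ \dim W=k}}\ \max_{0\neq v\in W}\frac{a(v,v)}{\|v\|_0^2},
\]
while Min-max principle B gives the analogous identity for $\lambda_k$ with the minimum over all $k$-dimensional subspaces of $V$. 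The lower bound $\lambda_k\le\lambda_{h,k}$ is then immediate: because $V_h\subset V$, the discrete minimization ranges over a subfamily of the admissible subspaces, so the discrete minimum cannot be smaller. Equivalently, one may apply Lemma~\ref{l:intersection} directly to any $k$-dimensional $W\subset V_h$ to produce $\psi_W\in W\setminus\{0\}$ lying in $\Span\{\varphi_j\}_{j\ge k}$, whence $a(\psi_W,\psi_W)=\sum_{j\ge k}\lambda_j(\varphi_j,\psi_W)^2\ge\lambda_k\|\psi_W\|_0^2$.

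The real work is the upper bound $\lambda_{h,k}\le C_w\lambda_k$. The natural trial space is $W_k=\Pi_h E_k$, where $E_k=\Span\{\varphi_1,\dots,\varphi_k\}$ is spanned by the first $k$ eigenfunctions of $\mathcal L$. For $u=\sum_{i=1}^k c_i\varphi_i\in E_k$, the $L^2$-orthonormality of the $\varphi_i$ and $a(\varphi_i,\varphi_j)=\lambda_i\delta_{ij}$ give $a(u,u)=\sum_i\lambda_i c_i^2\le\lambda_k\|u\|_0^2$. The stability \eqref{stability} then controls the numerator,
\[
a(\Pi_h u,\Pi_h u)=|\Pi_h u|_1^2\le c_1^2|u|_1^2=c_1^2\,a(u,u)\le c_1^2\lambda_k\|u\|_0^2,
\]
and for the denominator the approximation property \eqref{approximation} together with the triangle inequality yields
\[
\|\Pi_h u\|_0\ge\|u\|_0-\|u-\Pi_h u\|_0\ge\bigl(1-\sqrt{c_2\lambda_k}\,h\bigr)\|u\|_0 .
\]
If $\sqrt{c_2\lambda_k}\,h$ is bounded away from $1$, these two estimates show at once that $\Pi_h$ is injective on $E_k$ (so $\dim W_k=k$) and that the Rayleigh quotient on $W_k$ is $\le C\lambda_k$; substituting $W_k$ into the discrete min-max identity then gives $\lambda_{h,k}\le C\lambda_k$.

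The main obstacle is precisely that the denominator bound degenerates when $\lambda_k h^2$ is large, which occurs for $k$ close to $N$ since the largest discrete eigenvalue is of order $h^{-2}$. I would resolve this by splitting on a threshold. Fix $\theta\in(0,1/c_2)$ and distinguish two regimes. In the regime $\lambda_k\le\theta h^{-2}$ the factor $1-\sqrt{c_2\lambda_k}\,h\ge 1-\sqrt{c_2\theta}>0$ is uniformly positive, so the trial-space argument above goes through and yields $\lambda_{h,k}\le c_1^2(1-\sqrt{c_2\theta})^{-2}\lambda_k$. In the complementary regime $\lambda_k>\theta h^{-2}$ I discard the trial space and instead invoke the inverse inequality $|v_h|_1\le C_I h^{-1}\|v_h\|_0$, valid on quasi-uniform meshes (see \cite{2002CiarletP-aa}), which bounds every discrete eigenvalue by $\lambda_{h,k}\le\lambda_{h,N}\le C_I^2 h^{-2}<C_I^2\theta^{-1}\lambda_k$. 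Taking $C_w=\max\bigl\{c_1^2(1-\sqrt{c_2\theta})^{-2},\,C_I^2\theta^{-1}\bigr\}$, a constant independent of $k$ and $h$, establishes \eqref{e:discrete-continuous}.

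Finally, \eqref{discrete-Weyl} follows by combining \eqref{e:discrete-continuous} with Weyl's law. Lemma~\ref{lem:Weyl} (convergence of $\lambda_k/k^{2/d}$ to the positive constant $w_{\Omega}$, together with $\lambda_k>0$ for the Dirichlet problem) furnishes constants $0<\gamma_0\le\gamma_1'$ with $\gamma_0 k^{2/d}\le\lambda_k\le\gamma_1' k^{2/d}$ for all $k\ge1$. Chaining with \eqref{e:discrete-continuous} gives
\[
\gamma_0 k^{2/d}\le\lambda_k\le\lambda_{h,k}\le C_w\lambda_k\le C_w\gamma_1' k^{2/d},
\]
so \eqref{discrete-Weyl} holds with $\gamma_1=C_w\gamma_1'$. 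I expect the threshold argument paired with the inverse inequality to be the crux; everything else is bookkeeping with the stability and approximation bounds \eqref{stability}--\eqref{approximation}.
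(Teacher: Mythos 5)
Your proposal is correct and takes essentially the same approach as the paper: the lower bound via the min-max principle and the inclusion $V_h\subset V$, and the upper bound by splitting on a threshold comparing $\lambda_k$ with $h^{-2}$, handling the high regime with the inverse inequality on quasi-uniform meshes and the low regime with the trial space $\Pi_h\operatorname{span}\{\varphi_1,\dots,\varphi_k\}$ controlled by the stability and approximation properties \eqref{stability}--\eqref{approximation}. The only differences are cosmetic: you use a general threshold $\theta$ where the paper fixes $1/(2c_2)$, and you spell out the chaining with Weyl's law that the paper leaves implicit.
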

\begin{proof} For clarity, we present the proof for the Laplacian
  operator as the proof for the more general case is identical.  Using
  the infinite dimensional version of the min-max principle,
  Theorem~\ref{thm:minmax}, for a symmetric bilinear form
  $a(\cdot,\cdot): X\times X\mapsto \mathbb{R}$ with a dense domain
  $X\subset V$ and we have
\[
    \lambda_k = \inf_{\substack{W\subset X\\ \dim{W}=k}}\sup_{w\in W, w\ne 0}\frac{a(w,w)}{\|w\|_0^2}.
\]
As $V_h \subset X \subset V$ we have the inequality,
\begin{equation}\label{lambdakh}
  \begin{array}{rcl}
    \displaystyle  \lambda_{h,k} & = & \displaystyle
    \inf_{\substack{W\subset V_h\\ \dim{W}=k}}\sup_{w\in W, w\ne 0}\frac{a(w,w)}{\|w\|_0^2}\\
    & \ge & \displaystyle \inf_{\substack{W\subset X\\ \dim{W}=k}}\sup_{w\in W, w\ne 0}\frac{a(w,w)}{\|w\|_0^2} =\lambda_k,
\end{array}
  \end{equation}
because the infimum on the left is taken over a smaller collection of
spaces. This proves the lower bound in~\eqref{e:discrete-continuous}.

To show the upper bound, we first consider $k=N$. Since the finite
element mesh is quasi-uniform, by the inverse inequality, we have
    \[
        a(v,v)\lesssim h^{-2}\|v\|_0^2, \quad \forall v \in V_h.
    \]
    Therefore
    \[
        \lambda_{h,N}=\max_{v\in V_h}\frac{a(v,v)}{\|v\|_0^2}\lesssim h^{-2}
    \]
    Clearly, for all $k$ such that $\lambda_k\ge \frac{1}{2c_2h^2}$, where
    $c_2$ is the constant from \eqref{approximation}, we have
$$
    \lambda_{h, k}\le \lambda_{h, N}\lesssim h^{-2}\le 2c_2\lambda_k.
$$

    Next, we consider the case for $k$ such that
    \begin{equation}
      \label{small-lambdak}
\lambda_k<
    \frac{1}{2c_2h^2}.
    \end{equation}
 Let $W_k\subset V$ be the space spanned by the first $k$ eigenvectors of $-\Delta$, namely,
    \[
        W_k=\operatorname{span}\{\varphi_j\}_{j=1}^k.
    \]
Since
$\frac{|w|_{a}^2}{\|w\|_0^2}\le \lambda_k$ for all $w\in W_k$, from \eqref{approximation} we have
\begin{equation}\label{I-Qh}
  \begin{array}{rcl}
    \|(I-\Pi_h)w\|_0^2 & \le & c_2h^2|w|_{a}^2\\
    & \le & c_2h^2\lambda_k\|w\|_0^2\le \frac{1}{2}\|w\|_0^2.
 \end{array}
\end{equation}
This implies
\begin{equation}\label{e:lower-q}
  \begin{array}{rcl}
    \|\Pi_hw\|_0 & \ge &\|w\|_0-\|(I-\Pi_h)w\|_0 \\
    & \ge & \displaystyle \left(1-\frac{\sqrt{2}}{2}\right)\|w\|_0, \quad \forall w\in W_k.
    \end{array}
\end{equation}
    The above inequality implies that if $w\in W_k$ is such that $\Pi_hw=0$, then $w=0$.
This implies that $\{\Pi_h\varphi_j\}_{j=1}^k$ are linearly independent. We further denote
\[
W_{h, k}:=\Pi_hW_k=\operatorname{span}\{\Pi_h\varphi_j\}_{j=1}^k\subset V_h.
\]
We now use \eqref{stability}
and~\eqref{e:lower-q} and we have
    \begin{eqnarray*}
      \lambda_{h,k} &\le&
      \sup_{v=\Pi_hw\in W_{h,k}, v\neq 0}\frac{|v|_{a}^2}{\|v\|_0^2}
=
      \sup_{w\in W_{k}, w\neq 0}\frac{|\Pi_hw|_{a}^2}{\|\Pi_hw\|_0^2}\\
&\le&
        \sup_{w\in W_k, w\ne 0}\frac{c_1}{\left(1-\frac{\sqrt{2}}{2}\right)^2}\frac{|w|_{a}^2}{\|w\|_0^2}=
\frac{c_1}{\left(1-\frac{\sqrt{2}}{2}\right)^2}\lambda_k.
    \end{eqnarray*}
    This completes the proof.
\end{proof}

\section{An error estimate}\label{s:error}

In this section, we provide an error estimate for the finite element approximations of eigenvalue problems. Before we state the main result of this section let us comment on similar estimates found in the literature. The classical work~\cite{strang1973analysis} provides the estimate
 \begin{equation}\label{strangfix}
  0< \frac{\lambda_{k,h}-\lambda_k}{\lambda_k}\le h^{2D} \lambda_k^{D},
 \end{equation}
where $D$ is the polynomial degree of the finite element space. More recently, Zhang~\cite{zhang2015many} combines \eqref{strangfix} with Babu\v{s}ka-Osborn theory and shows that
 \begin{equation}\label{zhang2015}
  0< \frac{\lambda_{k,h}-\lambda_k}{\lambda_k}\approx h^{2D} \lambda_k^{D},
 \end{equation}
Furthermore, the results in~\cite{lin2014lower}, when combined with~\eqref{strangfix} and \eqref{zhang2015} imply that
 \begin{equation}
  0< \lambda_{k,h}-\lambda_k\le \sup_{\substack{u\in E_k\\\|u\|_V=1}}\inf_{v\in V_h}\|u-v\|_V^2.
 \end{equation}
 The main result of this section can be stated as the following theorem and provides an improved estimate for small $k$.
\begin{theorem}\label{thm:error-estimate}
For all $k$ satisfying \eqref{small-lambdak} with $\Pi_h=Q_h$, the following error
estimates hold:
\begin{equation}
  \label{eq:2}
  0\le \sqrt{\lambda_{k,h}}-\sqrt{\lambda_k}\le C \sup_{w\in W_k, \|w\|_0=1}|(I-Q_h)w|_{a}.
\end{equation}
    Here $C=1+\frac{c_1}{2}$ with $c_1$ being the constants in \eqref{stability}, which is independent of $k$ and $h$.
\end{theorem}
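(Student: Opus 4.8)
The lower bound $0\le \sqrt{\lambda_{h,k}}-\sqrt{\lambda_k}$ is immediate: it is just the square root of the left inequality $\lambda_k\le\lambda_{h,k}$ in \eqref{e:discrete-continuous} of Theorem~\ref{thm:WeylFE}, so all the content is in the upper bound. The plan is to reuse the trial-space construction from the proof of Theorem~\ref{thm:WeylFE}. Write $\delta:=\sup_{w\in W_k,\,\|w\|_0=1}|(I-Q_h)w|_1$ for the right-hand side. Since $k$ satisfies \eqref{small-lambdak}, the argument in \eqref{I-Qh}--\eqref{e:lower-q} applies, so $\{Q_h\varphi_j\}_{j=1}^k$ is linearly independent and $W_{h,k}=Q_hW_k\subset V_h$ is a genuine $k$-dimensional trial space. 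Feeding it into the discrete min--max identity (Theorem~\ref{thm:minmax}) and taking square roots gives
\begin{equation*}
\sqrt{\lambda_{h,k}}\le \sup_{w\in W_k,\,w\neq 0}\frac{|Q_hw|_1}{\|Q_hw\|_0}.
\end{equation*}

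To estimate the quotient, fix $w\in W_k$, normalize $\|w\|_0=1$, and set $e=(I-Q_h)w$. First I would peel off the seminorm with the triangle inequality, dividing by $\|w\|_0$ rather than by $\|Q_hw\|_0$:
\begin{equation*}
\frac{|Q_hw|_1}{\|w\|_0}\le \frac{|w|_1}{\|w\|_0}+\frac{|e|_1}{\|w\|_0}\le \sqrt{\lambda_k}+\delta,
\end{equation*}
using $|w|_1\le\sqrt{\lambda_k}\|w\|_0$ on $W_k$ (since the eigenfunctions are $a$-orthogonal) together with the definition of $\delta$. The price of dividing by $\|w\|_0$ is the factor $\|w\|_0/\|Q_hw\|_0$, which I control by the $L^2$-Pythagoras identity $\|Q_hw\|_0^2=\|w\|_0^2-\|e\|_0^2$: since \eqref{I-Qh} gives $\|e\|_0^2\le\tfrac12\|w\|_0^2$, the elementary estimate $(1-x)^{-1/2}\le 1+x$ on $[0,\tfrac12]$ yields $\|w\|_0/\|Q_hw\|_0\le 1+\|e\|_0^2/\|w\|_0^2$. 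Multiplying the two bounds,
\begin{equation*}
\frac{|Q_hw|_1}{\|Q_hw\|_0}\le \sqrt{\lambda_k}+\delta+\frac{\|e\|_0^2}{\|w\|_0^2}\bigl(\sqrt{\lambda_k}+\delta\bigr).
\end{equation*}

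The remaining task, and the point I expect to be delicate, is to absorb the correction $\tfrac{\|e\|_0^2}{\|w\|_0^2}\sqrt{\lambda_k}$ back into a plain multiple of $\delta$ with a constant independent of $h$ and $k$, even though it superficially carries an extra power of $\sqrt{\lambda_k}$. The device I would use is the idempotency of $I-Q_h$: applying the approximation property \eqref{approximation} to $e$ itself and noting $Q_he=Q_h(I-Q_h)w=0$ gives $\|e\|_0\le c_2^{1/2}h|e|_1$; pairing this with the ordinary bound $\|e\|_0\le c_2^{1/2}h|w|_1\le c_2^{1/2}h\sqrt{\lambda_k}$ converts the quadratic factor (for $\|w\|_0=1$) into
\begin{equation*}
\|e\|_0^2\sqrt{\lambda_k}\le c_2h^2\lambda_k\,|e|_1\le \tfrac12\,\delta,
\end{equation*}
because $c_2h^2\lambda_k<\tfrac12$ by \eqref{small-lambdak}. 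The leftover term $\tfrac{\|e\|_0^2}{\|w\|_0^2}\delta$ is genuinely higher order in $\delta$, and this is where the $H^1$-stability \eqref{stability} enters to fold it into the stated constant $C=1+\tfrac{c_1}{2}$. Taking the supremum over $w\in W_k$ then yields $\sqrt{\lambda_{h,k}}\le\sqrt{\lambda_k}+C\delta$. The triangle inequality and the Pythagoras step are routine; the real crux is converting the $L^2$-type remainder into the $H^1$-seminorm quantity $\delta$ without reintroducing $h$- or $k$-dependence, which hinges on the idempotency identity above, and pinning down the exact constant requires careful bookkeeping of which estimate (approximation versus stability) is spent on each factor.
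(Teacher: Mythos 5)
Your scaffolding is exactly the paper's: the trial space $Q_hW_k$ (justified via \eqref{I-Qh}--\eqref{e:lower-q}), the $L^2$-Pythagoras identity for the denominator, the idempotency estimate $\|(I-Q_h)w\|_0\le\sqrt{c_2}\,h\,|(I-Q_h)w|_1$, and the smallness condition \eqref{small-lambdak} are the same four ingredients, only assembled in a different order. The assembly, however, does not prove the theorem as stated, because the statement pins down the constant $C=1+\frac{c_1}{2}$ and your bookkeeping cannot produce it. Completing your estimates with $e=(I-Q_h)w$, $\|w\|_0=1$, and $\delta=\sup_{w}|e|_1$: the triangle inequality contributes $\delta$, the cross term contributes $\sqrt{\lambda_k}\,\|e\|_0^2\le c_2h^2\lambda_k|e|_1\le\frac12\delta$, and the leftover contributes $|e|_1\|e\|_0^2\le\frac12\delta$ (using $\|e\|_0^2\le\frac12$); the total is $C=2$. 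Your closing claim that the $H^1$-stability \eqref{stability} ``enters to fold'' the leftover into $C=1+\frac{c_1}{2}$ is the genuine gap: as you have arranged the proof, \eqref{stability} is never used at all --- the early triangle inequality $|Q_hw|_1\le|w|_1+|e|_1$ is precisely what replaces it --- and since your first two contributions already sum to $\frac32\delta$ while the leftover is positive, no treatment of the leftover can reach $1+\frac{c_1}{2}$ when $c_1<2$ (and only $c_1\ge 1$ is known in general; neither of the constants $2$ and $1+\frac{c_1}{2}$ dominates the other).

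To obtain the stated constant you must delay the triangle inequality, which is what the paper does. Keep the product $|Q_hw|_1\left(1+\|e\|_0^2\right)$ intact (this is \eqref{ineq1}); estimate only the copy of $|Q_hw|_1$ multiplying $\|e\|_0^2$ by stability plus \eqref{small-lambdak}, namely $|Q_hw|_1\le c_1|w|_1\le c_1\sqrt{\lambda_k}\le\frac{c_1}{\sqrt{2c_2}\,h}$, and pair it with $\|e\|_0^2\le\sqrt{c_2/2}\,h\,|e|_1$, so that $|Q_hw|_1\|e\|_0^2\le\frac{c_1}{2}|e|_1$ (these are \eqref{ineq2}--\eqref{ineq3}); then subtract $\sqrt{\lambda_k}=\sup_{w\in W_k,\,\|w\|_0=1}|w|_1$ inside the supremum and apply the triangle inequality only now, in the form $|Q_hw|_1-|w|_1\le|e|_1$. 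This yields $\sqrt{\lambda_{h,k}}-\sqrt{\lambda_k}\le\left(1+\frac{c_1}{2}\right)\delta$ exactly. The structural point your version misses is that the leading copy of $|Q_hw|_1$ must survive unestimated until it can be cancelled against $|w|_1$, while only the copy multiplying $\|e\|_0^2$ should be spent on stability; expanding both copies at once, as you do, inflates the constant and decouples it from $c_1$.
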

\begin{proof}
    If $k$ satisfies \eqref{small-lambdak},  by \eqref{I-Qh}
\begin{equation}\label{I-Q}
    \|(I-Q_h)w\|_0^2 \le \frac{1}{2}, \quad \forall w\in W_k, \|w\|_0=1.
\end{equation}
Since by~\eqref{e:lower-q} the image $Q_hW_k$ is a $k$-dimensional space,
it follows that
\begin{eqnarray}
  \lambda_{h,k} &\le& \sup_{w\in W_k, \|w\|_0=1}\frac{a(Q_hw,Q_hw)}{\|Q_hw\|_0^2}  \nonumber\\
  & = & \sup_{w\in W_k, \|w\|_0=1}\frac{a(Q_hw,Q_hw)}{1-\|(I-Q_h)w\|_0^2} \label{ineq4}\\
    & \le & \sup_{w\in W_k, \|w\|_0=1}a(Q_hw, Q_hw) (1+2\|(I-Q_h)w\|_0^2)\nonumber
\end{eqnarray}
Then
  \begin{eqnarray}
    \sqrt{\lambda_{h,k}} & \le &
    \sup_{w\in W_k}|Q_hw|_{a} \sqrt{1+2\|(I-Q_h)w\|_0^2}\nonumber\\
    & \le&  \sup_{w\in W_k}|Q_hw|_{a} (1+\|(I-Q_h)w\|_0^2)
    \label{ineq1}
\end{eqnarray}
Here we have used the following inequalities:
\[
    \sqrt{1+2x}\le 1+x\le \frac{1}{1-x}\le 1+2x, \quad x\in [0, 1/2].
\]
From \eqref{approximation}, we have
\begin{eqnarray*}
  \|(I-Q_h)w\|_0^2& = & \|(I-Q_h)(I-Q_h)w\|_0^2\\
  &\le& c_2h^2|(I-Q_h)w|_{a}^2.
\end{eqnarray*}
Combining with \eqref{I-Q}, we obtain
\begin{eqnarray}
    \|(I-Q_h)w\|_0^2
    & \le & \frac{1}{\sqrt{2}}\|(I-Q_h)w\|_0\nonumber \\
    & \le & \sqrt{\frac{c_2}{2}}h|(I-Q_h)w|_{a}.
    \label{ineq2}
\end{eqnarray}
Since for all $w\in W_k$, $\|w\|_0=1$, we have
\[
\lambda_k=\sup_{v\in W_k, \|v\|_0=1}a(v, v) \ge a(w,w),
\]
and hence we have that
\[
\sqrt{\lambda_k} \ge |w|_{a}.
\]
By \eqref{stability} and \eqref{small-lambdak} for all
$k\in W_k$, and $w$ with $\|w\|_0=1$ we obtain
\begin{equation}\label{ineq3}
    |Q_hw|_{a}\le c_1|w|_{a}\le c_1\sqrt{\lambda_k} \le \frac{c_1}{\sqrt{2c_2}h}.
\end{equation}
From \eqref{ineq1}, \eqref{ineq2} and \eqref{ineq3}, it then follows that
\[
    \sqrt{\lambda_{h,k}}\le\sup_{w\in W_k, \|w\|_0=1}\left(|Q_hw|_{a} + \frac{c_1}{2}|(I-Q_h)w|_{a}\right).
\]
This leads to
\begin{eqnarray*}
  \sqrt{\lambda_{h,k}}-\sqrt{\lambda_k} & \le & \sup_{w\in W_k, \|w\|_0=1}\left(|Q_hw|_{a} + \frac{c_1}{2}|(I-Q_h)w|_{a}\right)\\
  &&- \sup_{w\in W_k, \|w\|_0=1}|w|_{a} \\
  & \le & \sup_{w\in W_k, \|w\|_0=1}\left(|Q_hw|_{a} - |w|_{a}
  + \frac{c_1}{2}|(I-Q_h)w|_{a}\right)\\
    & \le & \sup_{w\in W_k, \|w\|_0=1}\left(|(I-Q_h)w|_{a} + \frac{c_1}{2}|(I-Q_h)w|_{a}\right)\\
    & = & \left(1+\frac{c_1}{2}\right)\sup_{w\in W_k, \|w\|_0=1}|(I-Q_h)w|_{a}.
\end{eqnarray*}
This completes the proof.
\end{proof}

\section*{Acknowledgments}
The work of Xu was partially supported by the DOE Grant
DE-SC0009249 as part of the  on Mathematics for Mesoscopic Modeling of Materials and by NSF grants DMS-1522615 and DMS-1819157.  The work of Zikatanov was partially supported by NSF grants DMS-1720114 and DMS-1819157.

\bibliographystyle{unsrt}
\bibliography{refs}
\end{document}